\documentclass[12pt]{amsart}
\usepackage{amsmath, amsthm, amscd, amsfonts, amssymb, graphicx, color}
\usepackage{enumerate}
\usepackage[bookmarksnumbered, colorlinks, plainpages]{hyperref}
\usepackage{tikz}
\usepackage{comment}

\newtheorem{theorem}{Theorem}[section]
\newtheorem{lemma}[theorem]{Lemma}
\newtheorem{proposition}[theorem]{Proposition}

\newtheorem{corollary}[theorem]{Corollary}
\newtheorem{definition}[theorem]{Definition}
\newtheorem{example}[theorem]{Example}
\newtheorem{remark}[theorem]{Remark}
\newtheorem{conjecture}[theorem]{Conjecture}

\excludecomment{mysection}

\DeclareMathOperator{\co}{co}

\begin{document}
	
\title[Norming order units]{Normed linear spaces which are isometric to order unit spaces}
\author{Anil Kumar Karn}
	
\address{School of Mathematical Sciences, National Institute of Science Education and Research Bhubaneswar, An OCC of Homi Bhabha National Institute, P.O. - Jatni, District - Khurda, Odisha - 752050, India.}

\email{\textcolor[rgb]{0.00,0.00,0.84}{anilkarn@niser.ac.in}}

\subjclass[2020]{Primary: 46B40; Secondary: 46B20.}
	
\keywords{Norming order unit, adjoining a normed linear space to an order unit space}
	
\begin{abstract}
	In this paper, we consider the linear direct sum of a real normed linear space with an order unit space and with a base normed space to obtain respectively a new order unit space and a new base normed space. As a consequence, we find that an $\ell_1$-space may be shown to be an order unit space. (However, not in the natural order.) Dually, an $\ell_{\infty}$-space may be shown to be a base normed space. To understand this aberration, we characterize the real normed linear spaces which are isometrically isomorphic to order unit spaces. We prove that other than the classical case of $\ell_{\infty}$, $\ell_1$ is also isometrically isomorphic to an order unit space whereas $\ell_2$ is not isometrically isomorphic to any order unit space.
\end{abstract}

\maketitle 

\section{Introduction} 

Alaoglu theorem roughly says that every normed linear space can be isometrically embedded into $C(K)$ for some suitable compact Haudorff space $K$. However, in case of an order unit space, this embedding enhances to an order embedding. Even otherwise, order units play a very significant role in functional analysis in general and operator algebras in particular. For example, they sit on the interface of the commutative and the non-commutative theory. Therefore, discovering more classes of order unit spaces is always desirable.

It is a folklore that we can adjoin an order unit to any real normed linear space. Probably, this construction is due to M. M. Day \cite[Theorem 1.6.1 (Alternative proof)]{Jam70}. Recently, in \cite{K21} the author studied the notion of absolute value in such spaces. Since the adjoining of an order unit to a real normed linear space is a one dimensional extension, such spaces also have a base normed structure. This aspect was studied by the author in \cite{K22}. In this paper, we generalize this construction. We consider the (linear) direct sum of a real normed linear space either with an order unit space or with a base normed space to construct a new order unit space and a new base normed space respectively. It is interesting to note that in the case of an order unit space the direct sum turns out to be an $\ell_1$-sum and in the case of a base normed space, it is an $\ell_{\infty}$-sum. As a consequence, we find that an $\ell_1$-space may be shown to be an order unit space. (However, not in the natural order.) Dually, an $\ell_{\infty}$-space may be shown to be a base normed space. 

Thus it will interesting to know under what condition a normed linear space is isometrically isomorphic to an order unit space. In this paper, we address this question. We introduce the notion of \emph{norming order units} in a normed linear space. We observe that the standard order unit in an order unit space is a norming order unit. More importantly, we show that if a norm one element in a normed linear space is a norming order unit, then it determines a (unique) order structure under which the space is an order unit space and the order unit norm is equal to the given norm. We examine some known ($\ell_{\infty}$) as well as some unknown ($\ell_1$) cases. We also find an example of a normed linear space that does not possess any norming order unit. 

\section{Adjoining normed linear spaces with ordered normed linear spaces} 

In this section, we consider the (linear) direct sum of an ordered linear space with a normed linear space and obtain a new ordered linear space under two suitable conditions. We begin with a base normed space which is easier to handle. 

Let $(V, V^+)$ be a real ordered vector space. A convex subset $B \subset V^+$ is called a \emph{base} of $V^+$, if $0 \notin B$ and for each $u \in V^+$ with $u \ne 0$, there is a unique $b \in B$ and a real number $k > 0$ such that $u = k b$. If $V^+$ is generating, then the set $K := \co (B \bigcup -B)$ is an absolutely convex and absorbing set in $V$ which determines a seminorm $\Vert\cdot\Vert_B$ in $V$. This is called the \emph{base seminorm} generated by $B$. When $\Vert\cdot\Vert_B$ is a norm, $(V, V^+, B)$ is called a \emph{base normed space}. For a detailed discussion on such spaces, one may refer to any standard book on ordered vector spaces. We follow \cite{A71,Jam70,WN73}. 
\begin{theorem}\label{basen}
	Let $(V, V^+, B)$ be a base normed space and let $X$ be any real normed linear space. Then $(V_X( := V \oplus_{\infty} X), V_X^+, B_X)$ is a base normed space for the cone $V_X^+ := \lbrace (u, x): u \in V^+ ~ \mbox{and} ~ \Vert u \Vert \ge \Vert x \Vert \rbrace$ and the base $B_X := B \times X_1$. (Here $X_1$ is the closed unit ball of $X$.) 
\end{theorem} 
\begin{proof}
	Note that $(0, 0) \in V_X^+$. Let $(u, x), (v, y) \in V_X^+$. Then $u, v \in V^+$ and we have $\Vert u \Vert \ge \Vert x \Vert$ and $\Vert v \Vert \ge \Vert y \Vert$. As $V^+$ is a cone, we have $u + v \in V^+$. Further, as the base norm is additive in $V^+$, we get 
	$$\Vert u + v \Vert = \Vert u \Vert + \Vert v \Vert \ge \Vert x \Vert + \Vert y \Vert \ge \Vert x + y \Vert.$$ 
	Thus $(u + v, x + y) \in V_X^+$. Now, if $k \in \mathbb{R}^+$, then $k u \in V^+$ and we have 
	$$\Vert k u \Vert = k \Vert u \Vert \ge k \Vert x \Vert = \Vert k x \Vert.$$
	Thus $k (u, x) \in V_X^+$ so that $V_X^+$ is a cone. Also, as $B$ and $X_1$ are convex subsets of $V^+$ and $X$ respectively, we get that $B_X$ is convex. Also, if $(u, x) \in B_X$, then $u \in B$ and $x \in X_1$. Thus $\Vert u \Vert = 1$ and $\Vert x \Vert \le 1$ so that $(u, x) \in V_X^+$ and subsequently, $B_X$ is a convex subset of $V_X^+$. We show that $B_X$ is a base for $V_X^+$. For this, let $(u, x) \in V_X^+$ with $(u, x) \ne (0, 0)$. Then $u \in V^+$ with $\Vert u \Vert \ge \Vert x \Vert$ so that $u \ne 0$. Since $B$ is a base for $V^+$, there is a unique $b \in B$ such that $u = \Vert u \Vert b$. Put $z = \Vert u \Vert^{-1} x$. Then $(b, z) \in B_X$ and $(u, x) = \Vert u \Vert (b, z)$. 
	Hence $B_X$ is a base for $V_X^+$. 
	
	By definition, the base (semi) norm corresponding to the base $B_X$ is gauge of $\co (B_X \cup - B_X)$. As $- X_1 = X_1$, we get that 
	$$\co (B_X \cup - B_X) = \co (B \cup - B) \times X_1 = V_1 \times X_1 = (V \oplus_{\infty} X)_1.$$ 
	Thus the base norm on $V_X$ determined by $B_X$ is given by 
	$$\Vert (v, x) \Vert_{\infty} := \max \lbrace \Vert v \Vert, \Vert x \Vert \rbrace$$
	for any $(v, x) \in V_X$. 
\end{proof} 
Now we turn to the dual notion. Let $(V, V^+)$ be a real ordered vector space and let $e \in V^+$. We say that $e$ is an \emph{order unit} for $V$, if for every $v \in V$ there exists $\lambda \in \mathbb{R}$ with $\lambda > 0$ such that $\lambda e \pm v \in V^+$. In this case, $e$ determines a semi-norm $\Vert\cdot\Vert_e$ given by 
$$\Vert v \Vert_e := \inf \lbrace \lambda > 0: \lambda e \pm v \in V^+ \rbrace$$ 
for all $v \in V$. 

We say that $V^+$ is \emph{proper}, if $V^+ \cap - V^+ = \lbrace 0 \rbrace$. Further, we say that $V^+$ is \emph{Archimedean}, if for any $v \in V$ with $\lambda u + v \in V^+$ for a fixed $u \in V$ and all $\lambda > 0$, we have $v \in V^+$. Now, an \emph{order unit space} is a real ordered vector space with an order unit $e$ such that $V^+$ is proper and Archimedean. (In this case, $\Vert\cdot\Vert_e$ is a norm on $V$ such that $V^+$ is norm-closed.) For details, we refer to \cite{A71,Jam70,WN73}, or for that matter, any standard book on ordered vector spaces. We quickly recall that the Banach dual of a base normed space is an order unit space and vice versa. 

We noticed that the adjoining a normed linear space to an order unit space is a little more complicated. Let $X$ be a real normed linear space and let $(V, V^+, e)$ be an order unit space. Then $V' \oplus_{\infty} X'$ is a base normed space as described in Theorem \ref{basen}. Thus the expected cone $V_X^+$ in $V\oplus_1 X$ should be described as 
$$V_X^+ = \lbrace (u, x) \in V \oplus_1 X: \langle (\phi, f), (u, x) \rangle \ge 0 ~ \mbox{for all} ~ \phi \in S(V) ~ \mbox{and} ~ f \in X_1' \rbrace.$$
Let $(u, x) \in V_X^+$. Then 
$$\inf \lbrace \phi(u): \phi \in S(V) \rbrace + \inf \lbrace f(x): f \in X_1' \rbrace \ge 0.$$ 
\begin{lemma}\label{inf}
	Let $X$ be a real normed linear space and let $(V, V^+, e)$ be an order unit space. 
	\begin{enumerate}
		\item For $x \in X$, we have 
		$$\inf \lbrace f(x): f \in X_1' \rbrace = - \Vert x \Vert.$$ 
		\item For $u \in V^+$, we have 
		$$\inf \lbrace \phi(u): \phi \in S(V) \rbrace = \Vert u \Vert - \left\Vert \Vert u \Vert e - u \right\Vert.$$
	\end{enumerate}
\end{lemma}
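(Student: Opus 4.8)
The plan is to handle the two parts separately: part (1) is a routine Hahn--Banach computation, while part (2) rests on the standard duality between an order unit space and its state space.

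For part (1), I would first invoke the Hahn--Banach theorem in its norming form: for every $x \in X$ there is $g \in X_1'$ with $g(x) = \Vert x \Vert$, so that $\sup\{f(x) : f \in X_1'\} = \Vert x \Vert$. Since the dual ball $X_1'$ is symmetric (that is, $f \in X_1'$ if and only if $-f \in X_1'$), replacing $f$ by $-f$ gives
$$\inf\{f(x) : f \in X_1'\} = -\sup\{(-f)(x) : f \in X_1'\} = -\sup\{f(x) : f \in X_1'\} = -\Vert x \Vert,$$
which is the assertion.

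For part (2), write $r = \Vert u \Vert$. The element $r e - u = \Vert u \Vert e - u$ lies in $V^+$, because $u \le \Vert u \Vert e$ is built into the order unit norm. Every state $\phi \in S(V)$ satisfies $\phi(e) = 1$, hence $\phi(u) = r - \phi(re - u)$ for all such $\phi$. Taking the infimum over $S(V)$ turns this into
$$\inf\{\phi(u) : \phi \in S(V)\} = r - \sup\{\phi(re - u) : \phi \in S(V)\}.$$
Thus the lemma reduces to the identity $\sup\{\phi(w) : \phi \in S(V)\} = \Vert w \Vert$ for the positive element $w = re - u$, after which substituting back yields the claimed value $\Vert u \Vert - \Vert \Vert u \Vert e - u \Vert$.

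It remains to prove that a positive element of an order unit space is normed by its states, and this is the main obstacle. One inequality is immediate: $\phi(w) \le \Vert \phi \Vert \, \Vert w \Vert = \Vert w \Vert$, since $\Vert \phi \Vert = \phi(e) = 1$ for a state. For the reverse I would exhibit a state attaining the norm. Consider the sublinear functional $p(v) = \inf\{\lambda \in \R : v \le \lambda e\}$, which is finite because $e$ is an order unit and satisfies $p(w) = \Vert w \Vert$ for $w \in V^+$. Define $f_0(t w) = t \Vert w \Vert$ on $\R w$; one checks $f_0 \le p$ on $\R w$, so Hahn--Banach extends $f_0$ to a linear functional $\phi$ on $V$ with $\phi \le p$. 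Since $p(v) \le 0$ whenever $v \le 0$, the functional $\phi$ is positive, and $\phi \le p$ together with $p(e) = 1$, $p(-e) = -1$ forces $\phi(e) = 1$; hence $\phi \in S(V)$ with $\phi(w) = \Vert w \Vert$. The delicate points are verifying $f_0 \le p$ (which uses $w \le \Vert w\Vert e$ and properness of the cone) and that the extension is genuinely positive and unital; both hinge on the Archimedean order unit structure, so this construction is exactly where the hypotheses are consumed.
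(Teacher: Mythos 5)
Your proof is correct, and part (1) coincides with the paper's argument: a Hahn--Banach norming functional $f_0$ with $f_0(x) = \Vert x \Vert$ plus the symmetry of $X_1'$. For part (2), however, you take a genuinely different route. The paper uses weak*-compactness of $S(V)$ to pick a state $\phi_0$ minimizing $\phi \mapsto \phi(u)$, observes that $\phi_0$ then maximizes $\phi(\Vert u \Vert e - u)$ over $S(V)$, and silently invokes the standard Kadison-type duality fact that the norm of a \emph{positive} element equals its maximal state value, so that $\left\Vert \Vert u \Vert e - u \right\Vert = \phi_0(\Vert u \Vert e - u)$. You instead make exactly that background fact your target: after the same elementary reduction $\inf_{\phi \in S(V)} \phi(u) = \Vert u \Vert - \sup_{\phi \in S(V)} \phi(\Vert u \Vert e - u)$, you prove $\sup \lbrace \phi(w) : \phi \in S(V) \rbrace = \Vert w \Vert$ for $w \in V^+$ from scratch, via a Hahn--Banach extension of $f_0(tw) = t \Vert w \Vert$ dominated by the sublinear functional $p(v) = \inf \lbrace \lambda \in \R : v \le \lambda e \rbrace$; positivity and unitality of the extension follow from $p(v) \le 0$ for $v \le 0$ and $p(\pm e) = \pm 1$ (both using properness of $V^+$), while $w \le \Vert w \Vert e$ --- which gives $p(w) = \Vert w \Vert$ and, through $p(w) + p(-w) \ge 0$, the needed bound $p(-w) \ge -\Vert w \Vert$ on the negative half of the line $\R w$ --- uses the Archimedean property. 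The trade-off: the paper's argument is shorter given the standard order unit space literature it leans on, and compactness hands it attainment of the infimum directly; yours is self-contained, never needs weak*-compactness of $S(V)$ (your Hahn--Banach state attains the supremum anyway, so the infimum in the lemma is still attained), and makes explicit precisely where properness and Archimedeanness are consumed --- hypotheses the paper's proof uses only through the uncited duality fact.
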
 
\begin{proof}
	(1): Find $f_0 \in X_1'$ such that $\Vert x \Vert = f_0(x)$. Then $f(x) \le f_0(x)$ for all $f \in X_1'$. Since $- f \in X_1'$ whenever $f \in X_1'$, we get that 
	$$\inf \lbrace f(x): f \in X_1' \rbrace = - f_0(x) = - \Vert x \Vert.$$ 
	(2): Since $S(V)$ is weak*-compact, there exists $\phi_0 \in S(V)$ such that $\phi_0(u) \le \phi(u)$ for all $\phi \in S(V)$. Thus 
	$$\phi(\Vert u \Vert e - u) = \Vert u \Vert - \phi(u) \le \Vert u \Vert - \phi_0(u) = \phi_0(\Vert u \Vert e - u)$$
	for all $\phi \in S(V)$. Therefore, 
	$$\left\Vert \Vert u \Vert e - u \right\Vert = \phi_0(\Vert u \Vert e - u) = \Vert u \Vert - \phi_0(u),$$, 
	or equivalently, 
	$$\inf \lbrace \phi(u): \phi \in S(V) \rbrace = \phi_0(u) = \Vert u \Vert - \left\Vert \Vert u \Vert e - u \right\Vert.$$
\end{proof}
We deduce that the expected cone $V_X^+$ should thus be given by 
$$V_X^+ = \lbrace (u, x) \in V \oplus_1 X: u \in V^+ ~ \mbox{and} ~ \Vert x \Vert \le \Vert u \Vert - \left\Vert \Vert u \Vert e - u \right\Vert \rbrace.$$ 
The condition given in $V_X^+$ can be presented in a simpler form. 
\begin{lemma}
	Let $(V, V^+, e)$ be an order unit space. Then for $u \in V$ and $k \in \mathbb{R}^+$, following two conditions are equivalent: 
	\begin{enumerate}
		\item $u \in V^+$ and $k \le \Vert u \Vert - \left\Vert \Vert u \Vert e - u \right\Vert \rbrace$. 
		\item $k e \le u$.
	\end{enumerate}
\end{lemma}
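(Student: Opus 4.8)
The plan is to prove both implications directly from the defining properties of the order unit norm, without invoking states. Throughout I write $\alpha = \Vert u \Vert$ and $\beta = \Vert \alpha e - u \Vert$, so that condition (1) reads ``$u \in V^+$ and $k \le \alpha - \beta$''. The two structural facts I will lean on are: (i) for every $w \in V$ one has $-\Vert w \Vert e \le w \le \Vert w \Vert e$ (the definition of the order unit norm together with Archimedeanness), so in particular a positive element $w$ satisfies $w \le \Vert w \Vert e$; and (ii) since $V^+$ is proper and $e \ge 0$ is an order unit, a scalar multiple $c e$ can lie in $V^+$ only when $c \ge 0$.

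For the implication (1) $\Rightarrow$ (2), I would first note that $u \in V^+$ forces $0 \le u \le \alpha e$, hence $\alpha e - u \in V^+$. Applying fact (i) to the positive element $\alpha e - u$ gives $\alpha e - u \le \beta e$, which rearranges to $(\alpha - \beta) e \le u$. Since $k \le \alpha - \beta$ and $e \ge 0$, the scalar $\alpha - \beta - k$ is nonnegative, so $(\alpha - \beta - k)e \in V^+$ and therefore $k e \le (\alpha - \beta) e \le u$, as desired.

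For the converse (2) $\Rightarrow$ (1), assume $k e \le u$. Because $k \ge 0$ we have $k e \in V^+$, so $u \ge k e \ge 0$ and hence $u \in V^+$. It remains to produce the numerical inequality $k \le \alpha - \beta$. I would rewrite $k e \le u$ as $\alpha e - u \le (\alpha - k) e$ (both say $u - ke \in V^+$) and combine it with $0 \le \alpha e - u$; this sandwiches the positive element $\alpha e - u$ between $0$ and $(\alpha - k)e$, from which $\beta = \Vert \alpha e - u \Vert \le \alpha - k$, i.e. $k \le \alpha - \beta$, should follow.

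The one point that needs care — and the step I expect to be the main, if minor, obstacle — is justifying $\beta \le \alpha - k$ from the sandwich $0 \le \alpha e - u \le (\alpha - k)e$: the estimate $\Vert w \Vert \le c$ for $0 \le w \le c e$ is only meaningful once we know $\alpha - k \ge 0$. This is exactly where fact (ii) enters: from $k e \le u \le \alpha e$ we get $(\alpha - k) e \in V^+$, and properness of the cone forces $\alpha - k \ge 0$. With that established the norm bound is legitimate and both directions close.
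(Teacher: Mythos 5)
Your proposal is correct and takes essentially the same route as the paper: both directions are handled by the order-theoretic sandwich $0 \le \Vert u \Vert e - u \le (\Vert u \Vert - k)e$, obtained from the basic fact $w \le \Vert w \Vert e$ and rearranged to and from $k e \le u$. The only difference is that you explicitly verify $\alpha - k \ge 0$ via properness of the cone, a point the paper asserts (``$u \in V^+$ with $k \le \Vert u \Vert$'') without justification, so your version is if anything slightly more careful.
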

\begin{proof}
	First we assume that $u \in V^+$ and $k \le \Vert u \Vert - \left\Vert \Vert u \Vert e - u \right\Vert$. Then $\left\Vert \Vert u \Vert e - u \right\Vert \le \Vert u \Vert - k$. Thus $0 \le \Vert u \Vert e - u \le (\Vert u \Vert - k) e$ so that $k e \le u$. 
	
	Conversely, we assume that $k e \le u$. Then $u \in V^+$ with $k \le \Vert u \Vert$. Since $u \le \Vert u \Vert e$, we get $0 \le \Vert u \Vert e - u \le \Vert u \Vert e - k e$. Thus $\left\Vert \Vert u \Vert e - u \right\Vert \le \Vert u \Vert - k$, whence $k \le \Vert u \Vert - \left\Vert \Vert u \Vert e - u \right\Vert$. 
\end{proof}
\begin{theorem}\label{adjno}
	Let $(V, V^+, e)$ be an order unit space and $X$ is a real normed linear space. Then $(V \oplus_1 X, V_X^+, e_X)$ is an order unit space where $V_X^+ = \lbrace (u, x): \Vert x \Vert e \le u \rbrace$ and $e_X := (e, 0)$.
\end{theorem}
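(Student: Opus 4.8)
The plan is to verify the four defining properties of an order unit space in turn --- that $V_X^+$ is a proper Archimedean cone, that $e_X$ is an order unit, and that the induced order unit norm recovers the given $\ell_1$ norm --- drawing throughout on the fact that $(V, V^+, e)$ is itself an order unit space. Since the theorem already presents the cone in the clean form $V_X^+ = \{(u,x): \Vert x \Vert e \le u\}$ (which the preceding lemma shows agrees with the description derived from states), I would work directly with this inequality rather than with the state-functional formulation.

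First I would check that $V_X^+$ is a cone. Closure under nonnegative scalars is immediate from $\Vert \alpha x \Vert e = \alpha \Vert x \Vert e \le \alpha u$. For additivity, if $\Vert x_i \Vert e \le u_i$ for $i = 1, 2$, then the triangle inequality gives $\Vert x_1 + x_2 \Vert e \le (\Vert x_1 \Vert + \Vert x_2 \Vert) e \le u_1 + u_2$. Properness follows by adding the two relations $\Vert x \Vert e \le u$ and $\Vert x \Vert e \le -u$ coming from $\pm(u,x) \in V_X^+$, which forces $\Vert x \Vert e \le 0$, hence $x = 0$ and then $u = 0$ since $V^+$ is proper. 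That $e_X = (e,0)$ is an order unit is seen by choosing, for a given $(u,x)$, a scalar $\mu > 0$ with $-\mu e \le u \le \mu e$ (possible since $e$ is an order unit in $V$) and setting $\lambda = \Vert x \Vert + \mu$, so that $\lambda e \pm u - \Vert x \Vert e = \mu e \pm u \in V^+$, i.e. $\lambda e_X \pm (u,x) \in V_X^+$. The Archimedean property transfers directly: if $(u,x) + \lambda e_X \in V_X^+$ for all $\lambda > 0$, then $\Vert x \Vert e - u \le \lambda e$ for all $\lambda > 0$, and the Archimedean property of $V$ yields $\Vert x \Vert e \le u$, so $(u,x) \in V_X^+$.

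The substantive step, and the one I expect to be the crux, is showing that the order unit norm $\Vert (u,x) \Vert_{e_X} = \inf\{\lambda > 0: \lambda e_X \pm (u,x) \in V_X^+\}$ equals $\Vert u \Vert + \Vert x \Vert$, the $\ell_1$ norm on $V \oplus_1 X$. Unwinding the definition, $\lambda e_X \pm (u,x) \in V_X^+$ means precisely that $\Vert x \Vert e + u \le \lambda e$ and $\Vert x \Vert e - u \le \lambda e$. For the upper bound I would take $\lambda = \Vert u \Vert + \Vert x \Vert$ and use that, by the Archimedean property of $V$, $-\Vert u \Vert e \le u \le \Vert u \Vert e$, so that $\Vert u \Vert e \pm u \ge 0$ delivers both inequalities at once. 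For the lower bound --- the delicate direction --- I would read the two inequalities as $-(\lambda - \Vert x \Vert) e \le u \le (\lambda - \Vert x \Vert) e$; adding them first shows $\lambda \ge \Vert x \Vert$, and then the characterization of the order unit norm on $V$ as $\Vert u \Vert = \inf\{\mu \ge 0: -\mu e \le u \le \mu e\}$ gives $\Vert u \Vert \le \lambda - \Vert x \Vert$, i.e. $\lambda \ge \Vert u \Vert + \Vert x \Vert$. Combining the two bounds yields $\Vert (u,x) \Vert_{e_X} = \Vert u \Vert + \Vert x \Vert$, so the order unit norm coincides with the $\ell_1$ norm and $(V \oplus_1 X, V_X^+, e_X)$ is an order unit space.
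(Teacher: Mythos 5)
Your proposal is correct and follows essentially the same route as the paper: verify the cone axioms directly from the inequality $\Vert x \Vert e \le u$, get properness and the Archimedean property by transferring them from $V^+$, and establish $\Vert (u,x) \Vert_{e_X} = \Vert u \Vert + \Vert x \Vert$ by the same two-sided estimate (take $\lambda = \Vert u \Vert + \Vert x \Vert$ for one direction, and unwind $\lambda e_X \pm (u,x) \in V_X^+$ into $\pm u \le (\lambda - \Vert x \Vert) e$ for the other). Your explicit remark that $\lambda \ge \Vert x \Vert$ must be checked before invoking the order unit norm characterization is a small point the paper passes over silently, but otherwise the arguments coincide step for step.
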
 
\begin{proof}
	Let $(u, x) \in V_X^+$ and let $\alpha $ be a positive real number. Then $\Vert x \Vert e \le u$ so that $\Vert \alpha x \Vert e \le \alpha u$. Thus $\alpha (u, x) \in V_X^+$. Next, let $(u, x), (v, y) \in V_X^+$. Then $\Vert x \Vert e \le u$ and $\Vert y \Vert e \le v$. Thus $\Vert x + y \Vert e \le (\Vert x \Vert + \Vert y \Vert) e \le u + v$ so that $(u, x) + (v, y) \in V_X^+$ as $u + v \in V^+$. Hence $V_X^+$ is a cone. 
	
	Next, let $\pm (u, x) \in V_X^+$. Then $\Vert x \Vert e \le u$ and $\Vert - x \Vert e \le - u$. Thus $\pm u \in V^+$. Since $V^+$ is proper, we get $u = 0$ and consequently, $x = 0$. Thus $V_X^+$ is also proper.
	
	Now, we show that $e_X$ is an order unit for $V \oplus_1 X$. For this, let $(v, x) \in V \oplus_1 X$. Put $k = \Vert v \Vert + \Vert x \Vert$. Since $\Vert v \Vert e \pm v \ge 0$, we get $k e \pm v \ge \Vert x \Vert e$. Thus $k e_X \pm (u, x)\in V_X^+$. Therefore, $e_X$ is an order unit for $V \oplus_1 X$. We also notice that $\Vert (v, x) \Vert_o \le \Vert (v, x) \Vert_1$ for all $(v, x) \in V \oplus_1 X$ where $\Vert\cdot\Vert_o$ is the corresponding order unit (semi)norm on $V \oplus_1 X$.
	
	Next, we prove that $V_X^+$ is Archimedean. Let $(v, x) \in V \oplus_1 X$ be such that $k e_X + (v, x) \in V_X^+$ for all $k > 0$. That is, $(k e + v, x) \in V_X^+$ for all $k > 0$. Then $\Vert x \Vert e \le k e + v$ for all $k > 0$. Now by the Archimedean property in $V^+$, we get $\Vert x \Vert e \le v$, that is, $(v, x) \in V_X^+$. Therefore, $V_X^+$ is Archimedean. 
	
	Hence $(V \oplus_1 X, V_X^+, e_X)$ is an order unit space. Also, we have that the corresponding order unit norm $\Vert \cdot \Vert_o \le \Vert \cdot \Vert_1$. We prove their equality. Let $(v, x) \in V \oplus_1 X$. Assume that $\lambda e_X \pm (v, x) \in V_X^+$, that is, $(\lambda e \pm v, \pm x) \in V_X^+$ for some $\lambda > 0$. Then $\Vert x \Vert e \le \lambda e \pm v$. Thus $\pm v \le (\lambda - \Vert x \Vert) e$. It follows that $\Vert v \Vert \le \lambda - \Vert x \Vert$, that is $\Vert (v, x) \Vert_1 \le \lambda$. Hence $\Vert (v, x) \Vert_1 \le \Vert (v, x) \Vert_o$. This completes the proof. 
\end{proof}

Now we describe the order unit space $(\ell_1, e_1)$ as an order unit space by adjoining a normed linear space to an order unit space.
\begin{example}
	Consider the order unit space $(\mathbb{R}, 1)$ and the normed linear space $\mathbb{R}$ in their natural setting. Then by Theorem \ref{adjno}, $\ell_1^2$ is an order unit space with the cone $(\ell_1^2)_{e_1^2}^+ := \lbrace (\alpha, \beta): \alpha \ge \vert \beta \vert \rbrace$ and the order unit $e_1^2 := (1, 0)$. Next, consider the resulting order unit space $(\ell_1^2, (\ell_1^2)_{e_1^2}^+, e_1^2)$ and again the naturally normed $\mathbb{R}$. Again invoking Theorem \ref{adjno}, we now obtain the order unit space $(\ell_1^3, (\ell_1^3)_{e_1^3}^+, e_1^3)$ where $e_1^3 = (1, 0, 0)$ and $(\ell_1^3)_{e_1^3}^+ := \left\lbrace (\alpha, \beta, \gamma): (\alpha, \beta) \ge \vert \gamma \vert e_1^2 ~ \mbox{in} ~ (\ell_1^2)_{e_1^2}^+ \right\rbrace$. If $(\alpha, \beta, \gamma) \in (\ell_1^3)_{e_1^3}^+$, then $\vert \gamma \vert e_1^2 \le (\alpha, \beta)$ in $(\ell_1^2)_{e_1^2}^+$ so that $(\alpha - \vert \gamma \vert, \beta) \in (\ell_1^2)_{e_1^2}^+$. Thus $\vert \beta \vert \le \alpha - \vert \gamma \vert$, that is, $\Vert (\beta, \gamma) \Vert_1 = \vert \beta \Vert + \vert \gamma \vert \le \alpha$. Now it follows that $(\ell_1^3, (\ell_1^3)_{e_1^3}^+, e_1^3)$ is also the oder unit obtained by adjoining the normed linear space $\ell_1^2$ to the order unit space $(\mathbb{R}, 1)$. Also following the induction on $n$, we may conclude that for each $n \in \mathbb{N}$, $(\ell_1^{n+1}, (\ell_1^{n+1})_{e_1^{n+1}}^+, e_1^{n+1})$ can be obtained as the order unit space by adjoining $\mathbb{R}$ to $(\mathbb{R}, 1)$ successively $n$-times or simply adjoining $\ell_1^n$ to $(\mathbb{R}, 1)$. As a result of induction on $n$, we can also conclude that $(\ell_1, (\ell_1)_{e_1^{n+1}}^+, e_1)$ can be obtained as the order unit space by adjoining $\mathbb{R}$ to $(\mathbb{R}, 1)$ successively countably many times or simply adjoining $\ell_1$ to $(\mathbb{R}, 1)$.
\end{example}
\begin{remark}
	A dual statement may be made about $\ell_{\infty}$ as a base normed space which we leave to the readers. Furthermore, when a (real) Hilbert space is adjoined to the order unit space $(\mathbb{R}, 1)$, we get spin factors \cite[Section 6.1]{HS84}. (See also \cite{K21} where we have generalized this idea.) 
\end{remark} 
\begin{remark}
	By \cite[Lemma 5.1]{K24}, every $2$-dimensional order unit space is isometrically isomorphic to $\ell_{\infty}^2$. Here we construct a map to demonstrate that as a real normed linear space, $\ell_1^2$ is isometrically isomorphic to the order unit space $\ell_{\infty}^2$. 
	
	Consider $\mathbb{R}$ as a real normed linear space with the standard norm as the modulus and also consider $(\mathbb{R}, 1)$ as the order unit space with its natural cone. Then $\mathbb{R} \oplus_1 \mathbb{R} \cong \ell_1^2$ and by Theorem \ref{adjno}, it is an order unit space with its cone $C = \lbrace (\alpha, \beta): \vert \beta \vert \le \alpha \rbrace$ and the order unit $e_1 := (1, 0)$. Now define $\psi: \ell_1^2 \to \ell_{\infty}^2$ given by $\psi(\alpha, \beta) = (\alpha + \beta, \alpha - \beta)$ for all $\alpha, \beta \in \mathbb{R}$. Then $\psi$ is a linear, isometric isomorphism. Also $\psi(C) = \lbrace (x, y): x, y \in [0, \infty) \rbrace$ and $\psi(e_1) = (1, 1)$.
\end{remark}

\section{Norming order unit} 

When we think of an order unit space, generally $\ell_{\infty}$-type spaces appear in our minds. However, we have noted in the previous section that even $\ell_1$ can be shown to be an order unit space, albeit not in its natural order. This leads to a curiosity: which normed linear space can be seen as order unit space. In this section, we try to address this question. Let us begin with a notion.
\begin{definition}
	Let $X$ be a non-zero real normed linear space and let $e \in X$ with $\Vert e \Vert = 1$. We say that $e$ is a \emph{norming order unit} for $X$, if $\Vert 2 x - \Vert x \Vert e \Vert \le \Vert x \Vert$ whenever $\Vert 2 x - \lambda e \Vert \le \lambda$ for some $\lambda \ge \Vert x \Vert$.
\end{definition}
Note that $- e$ is a norming order unit for $X$ whenever $e$ is a norming order unit for $X$. More generally, if $T: X \to X$ is a surjective linear isometry, then $T(e)$ is a norming order unit for $X$ whenever $e$ is a norming order unit for $X$. The next result justifies the nomenclature ``norming order unit''.
\begin{proposition}\label{pos}
	Let $(V, e)$ be an order unit space and let $u \in V$. Then the following statements are equivalent: 
	\begin{enumerate}
		\item $u \in V^+$.
		\item $\Vert 2 u - \lambda e \Vert \le \lambda$ for any $\lambda \ge \Vert u \Vert$.
		\item $\Vert 2 u - \lambda e \Vert \le \lambda$ for some $\lambda \ge \Vert u \Vert$. 
		\item $\Vert 2 u - \Vert u \Vert e \Vert = \Vert u \Vert$.
	\end{enumerate}
\end{proposition}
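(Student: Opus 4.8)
The plan is to prove the cyclic chain of implications $(1)\Rightarrow(2)\Rightarrow(3)\Rightarrow(4)\Rightarrow(1)$, exploiting the standard order-unit space machinery: the norm is $\Vert u\Vert=\inf\{t>0:-te\le u\le te\}$, and the positive cone is Archimedean and closed, so $u\ge 0$ iff $u+\varepsilon e\ge 0$ for all $\varepsilon>0$, equivalently iff $te+u\ge 0$ for all sufficiently large $t$. The key translation I want to use throughout is that $\Vert 2u-\lambda e\Vert\le\lambda$ says exactly $-\lambda e\le 2u-\lambda e\le\lambda e$, i.e. $0\le 2u\le 2\lambda e$, which simplifies to $0\le u\le\lambda e$.

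For $(1)\Rightarrow(2)$: assume $u\ge 0$ and let $\lambda\ge\Vert u\Vert$. Then by definition of the order-unit norm we have $u\le\Vert u\Vert e\le\lambda e$, and $u\ge 0$ gives the lower bound, so $0\le u\le\lambda e$, which by the displayed equivalence is exactly $\Vert 2u-\lambda e\Vert\le\lambda$. The implication $(2)\Rightarrow(3)$ is immediate since $\lambda=\Vert u\Vert$ is an admissible choice (note $\Vert u\Vert\ge\Vert u\Vert$). For $(3)\Rightarrow(1)$: the hypothesis $\Vert 2u-\lambda e\Vert\le\lambda$ for some $\lambda\ge\Vert u\Vert$ gives $0\le u\le\lambda e$ by the same translation, and the first inequality is precisely $u\in V^+$.

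It remains to handle the equality statement $(4)$. For $(1)\Rightarrow(4)$ (or equivalently deriving it from $(2)$), observe that $\lambda=\Vert u\Vert$ in the inequality $\Vert 2u-\Vert u\Vert e\Vert\le\Vert u\Vert$ already follows from $(2)$; the reverse inequality $\Vert 2u-\Vert u\Vert e\Vert\ge\Vert u\Vert$ must be argued separately, and this is where I expect the only real subtlety. I would get it from the triangle inequality together with the fact that $\Vert e\Vert=1$ and $\Vert u\Vert$ is attained as a least upper bound: from $\Vert 2u-\Vert u\Vert e\Vert=\Vert u\Vert$ we cannot have strict inequality because shrinking the coefficient of $e$ below $\Vert u\Vert$ would violate $u\le te$ for $t<\Vert u\Vert$. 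Concretely, if $\Vert 2u-\Vert u\Vert e\Vert=:s<\Vert u\Vert$, then $-se\le 2u-\Vert u\Vert e\le se$ would give $u\le\tfrac{1}{2}(\Vert u\Vert+s)e$ with $\tfrac12(\Vert u\Vert+s)<\Vert u\Vert$, and combined with $u\ge 0$ this forces $\Vert u\Vert\le\tfrac12(\Vert u\Vert+s)<\Vert u\Vert$, a contradiction; hence equality holds. Finally $(4)\Rightarrow(1)$ follows because $(4)$ is a special case of $(3)$ with $\lambda=\Vert u\Vert$, which we have already shown implies $(1)$. The main obstacle is thus purely the lower bound in $(4)$, where one must use that the order-unit norm is exactly the infimal $t$ with $u\le te$ rather than merely an upper bound.
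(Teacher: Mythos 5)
Your proposal is correct and follows essentially the same route as the paper: both rest on the translation $\Vert 2u - \lambda e\Vert \le \lambda \iff 0 \le u \le \lambda e$ (valid since the cone is Archimedean) for the equivalence of (1)--(3), and your contradiction argument for the lower bound in (4) is the same computation the paper performs directly, deducing $\Vert u\Vert \le \frac{1}{2}(\Vert u\Vert + \alpha)$ from $0 \le u \le \frac{1}{2}(\Vert u\Vert + \alpha)e$ where $\alpha = \Vert 2u - \Vert u\Vert e\Vert$.
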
 
\begin{proof}
	Let $u \in V^+$ and assume that $\lambda \ge \Vert u \Vert$. Then $0 \le u \le \Vert u \Vert e \le \lambda e$. Thus $\lambda e \pm (2 u - \lambda e) \in V^+$ and we have $\Vert 2 u - \lambda e \Vert \le \lambda$. Conversely, assume that $\Vert 2 u - \lambda e \Vert \le \lambda$ for some $\lambda \ge \Vert u \Vert$. Then $\lambda e \pm (2 u - \lambda e) \in V^+$ so that $u \in V^+$. Hence (1), (2) and (3) are equivalent. Also then (4) implies (1). Finally, assume that $u \in V^+$ and let $\Vert 2 u - \Vert u \Vert e \Vert = \alpha$. Then $\alpha e \pm (2 u - \Vert u \Vert e) \in V^+$. Thus $0 \le u \le \frac 12 (\Vert u \Vert + \alpha) e$. It follows that $\Vert u \Vert \le \alpha$. By (2), we have $\alpha \le \Vert u \Vert$ which completes the proof. 
\end{proof} 
We show that whenever a real normed linear space possesses a norming order unit, it is isometric to an order unit space. 
\begin{theorem}\label{nou}
	Let $(X, \Vert\cdot\Vert)$ be a real normed linear space and assume that $e \in X$ with $\Vert e \Vert = 1$ be a norming order unit for $X$. Put 
	$$X_e^+ = \lbrace x \in X: \Vert 2 x - \Vert x \Vert e \Vert \le \Vert x \Vert \rbrace.$$ 
	Then $X_e^+$ is a proper and Archimedean cone in $X$ and $e$ is an order unit for $(X, X_e^+)$ so that $(X, X_e^+, e)$ is an order unit space. Moreover, $e$ determines $\Vert\cdot\Vert$ as an order unit norm on $X$.
\end{theorem}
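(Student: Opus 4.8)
The plan is to reconstruct the order from the norm exactly as Proposition~\ref{pos} suggests, and to show that the norming order unit axiom is precisely what is needed to make $X_e^+$ additive. Throughout I would use the reformulation that, because $e$ is a norming order unit, $x \in X_e^+$ if and only if there exists some $\lambda \ge \Vert x \Vert$ with $\Vert 2x - \lambda e \Vert \le \lambda$: the ``only if'' is immediate by taking $\lambda = \Vert x \Vert$, and the ``if'' is the defining property of a norming order unit. This two-sided description of $X_e^+$ is what makes the arguments below go through.

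First I would check that $X_e^+$ is a cone. Positive homogeneity is immediate from homogeneity of the norm, and $0, e \in X_e^+$ are direct computations. The one substantial point is closure under addition, and this is where the hypothesis enters essentially: given $x, y \in X_e^+$, set $\lambda = \Vert x \Vert + \Vert y \Vert \ge \Vert x + y \Vert$; then the triangle inequality gives $\Vert 2(x+y) - \lambda e \Vert \le \Vert 2x - \Vert x \Vert e \Vert + \Vert 2y - \Vert y \Vert e \Vert \le \Vert x \Vert + \Vert y \Vert = \lambda$, and invoking the norming order unit property (with $x+y$ in place of $x$) converts this witness back into membership $x + y \in X_e^+$. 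Properness is easier and does not use the hypothesis: if $\pm x \in X_e^+$, then writing $4x = (2x - \Vert x \Vert e) + (2x + \Vert x \Vert e)$ and applying the triangle inequality yields $4 \Vert x \Vert \le 2 \Vert x \Vert$, forcing $x = 0$. I expect this additivity step to be the conceptual crux of the whole theorem.

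The core of the remaining claims is the identity: for $x \in X$ and $\mu \ge 0$, one has $\mu e + x \in X_e^+$ and $\mu e - x \in X_e^+$ if and only if $\Vert x \Vert \le \mu$. The forward direction (from $\Vert x \Vert \le \mu$) follows by using $\lambda = 2\mu$ as the witness for both of $\mu e \pm x$, since $\Vert 2(\mu e \mp x) - 2\mu e \Vert = 2\Vert x\Vert \le 2\mu$ and $2\mu \ge \Vert \mu e \mp x \Vert$. The reverse direction is the main computation: writing $a = \mu e + x$, $b = \mu e - x$ and $2a = \Vert a \Vert e + u$, $2b = \Vert b \Vert e + v$ with $\Vert u \Vert \le \Vert a \Vert$, $\Vert v \Vert \le \Vert b \Vert$, I would estimate $4 \Vert x \Vert = \Vert 2(a - b) \Vert \le |\Vert a \Vert - \Vert b \Vert| + \Vert u \Vert + \Vert v \Vert \le 2 \max(\Vert a \Vert, \Vert b \Vert) \le 2(\mu + \Vert x \Vert)$, which rearranges to $\Vert x \Vert \le \mu$. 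From this identity, taking $\mu = \Vert x \Vert$ gives $-\Vert x \Vert e \le x \le \Vert x \Vert e$, so $e$ is an order unit; and since the identity identifies $\lbrace \lambda > 0 : -\lambda e \le x \le \lambda e \rbrace$ with $[\Vert x \Vert, \infty) \cap (0,\infty)$, the order unit norm $\Vert x \Vert_e = \inf \lbrace \lambda > 0 : -\lambda e \le x \le \lambda e \rbrace$ equals $\Vert x \Vert$.

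Finally, for the Archimedean property I would argue by closedness. The map $x \mapsto \Vert 2x - \Vert x \Vert e \Vert - \Vert x \Vert$ is continuous, so $X_e^+$, being its sublevel set at $0$, is closed in $(X, \Vert \cdot \Vert)$. A norm-closed cone is Archimedean by the standard argument: if $n x \le y$ for all $n \in \N$, then $\frac 1n y - x \in X_e^+$ for every $n$ by positive homogeneity, and letting $n \to \infty$ gives $-x \in X_e^+$ by closedness, i.e. $x \le 0$. Assembling these four ingredients shows that $(X, X_e^+, e)$ is an order unit space whose order unit norm is the original norm. The only places where genuine care is needed are the additivity of $X_e^+$, where the norming order unit hypothesis is indispensable, and the norm estimate in the reverse direction of the identity; the rest is routine.
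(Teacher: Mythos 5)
Your proposal is correct, and its conceptual skeleton coincides with the paper's: the two-sided description of $X_e^+$ via the norming hypothesis, additivity of the cone from the triangle inequality with witness $\Vert x \Vert + \Vert y \Vert$, and a triangle-inequality estimate of the form $4\Vert x\Vert \le 2(\mu + \Vert x\Vert)$ for the norm equality are exactly the paper's moves. Two steps take a genuinely different route, both to your advantage. For the reverse direction of your identity ($\mu e \pm x \in X_e^+ \Rightarrow \Vert x \Vert \le \mu$), the paper first upgrades membership to ``$\Vert 2x - \lambda e \Vert \le \lambda$ for \emph{all} $\lambda \ge \Vert x \Vert$,'' applies this with the common witness $\lambda + \Vert x \Vert$ to both $\lambda e + x$ and $\lambda e - x$, and then \emph{adds} the two resulting vectors; you keep the raw witnesses $\Vert a \Vert$, $\Vert b \Vert$ from the definition, \emph{subtract}, and absorb the mismatch via $\vert s - t \vert + s + t = 2\max(s,t)$ --- an equivalent computation that spares you the ``for all $\lambda$'' upgrade at this point (though the paper reuses that upgrade elsewhere, e.g.\ in its Archimedean step, so it is not wasted there). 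For Archimedeanity, the paper computes the explicit limit $\Vert 2(x + \lambda e) - (\Vert x \Vert + \lambda)e \Vert \le \Vert x \Vert + \lambda$ as $\lambda \to 0^+$, whereas you observe that $X_e^+$ is the zero-sublevel set of the continuous function $x \mapsto \Vert 2x - \Vert x \Vert e \Vert - \Vert x \Vert$, hence norm-closed, and quote the standard fact that closed cones are Archimedean; your route is cleaner and strictly more informative (closedness immediately yields both the $nx \le y$ formulation and the order-unit formulation, via $x + \tfrac 1n e \to x$), while the paper's limit argument is essentially that same continuity fact carried out by hand in the one needed direction. Your properness argument ($4x = (2x - \Vert x \Vert e) + (2x + \Vert x \Vert e)$, giving $4\Vert x \Vert \le 2 \Vert x \Vert$) is also slightly more direct than the paper's, which specializes its norm-equality estimate to $\lambda = 0$, and you correctly note it needs only the definition of $X_e^+$, not the norming hypothesis.
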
 
\begin{proof}
	We have $0, e \in X_e^+$ so that $X_e^+ \ne \emptyset$. Let $x \in X_e^+$ and $\alpha \ge 0$. Then $\Vert 2 x - \Vert x \Vert e \Vert \le \Vert x \Vert$. Thus 
	$$\Vert 2 \alpha x - \Vert \alpha x \Vert e \Vert = \alpha \Vert 2 x - \Vert x \Vert e \Vert \le \alpha \Vert x \Vert = \Vert \alpha x \Vert$$ 
	so that $\alpha x \in X_e^+$. Next, let $x, y \in X_e^+$. Then $\Vert 2 x - \Vert x \Vert e \Vert \le \Vert x \Vert$ and $\Vert 2 y - \Vert y \Vert e \Vert \le \Vert y \Vert$. Thus 
	\begin{eqnarray*}
		\Vert 2 (x + y) - (\Vert x \Vert + \Vert y \Vert) e \Vert &\le& \Vert 2 x - \Vert x \Vert e \Vert + \Vert 2 y - \Vert y \Vert e \Vert \\ 
		&\le& \Vert x \Vert + \Vert y \Vert.
	\end{eqnarray*}
	Since $e$ is a norming order unit for $X$, we conclude that $\Vert 2 (x + y) - \Vert x + y \Vert e \Vert \le \Vert x + y \Vert$. Thus $x + y \in X_e^+$ and $X_e^+$ is a cone. We observe that if $x \in X_e^+$, then for any $\lambda \ge \Vert x \Vert$, we have $\Vert 2 x - \lambda e \Vert \le \lambda$. In fact, 
	\begin{eqnarray*}
		\Vert 2 x - \lambda e \Vert &=& \Vert (2 x - \Vert x \Vert e) - (\lambda - \Vert x \Vert) e \Vert \\ 
		&\le& \Vert 2 x - \Vert x \Vert e \Vert + \Vert (\lambda - \Vert x \Vert) e \Vert \\ 
		&\le& \Vert x \Vert + \lambda - \Vert x \Vert = \lambda.
	\end{eqnarray*} 
	Hence, $x \in X_e^+$ if and only if $\Vert 2 x - \lambda e \Vert \le \lambda$ for any $\lambda \ge \Vert x \Vert$. 
	
	Now, we show that $e$ is an order unit for $(X, X_e^+)$. Let $x \in X$. Then $\Vert (\Vert x \Vert e \pm x) - \Vert x \Vert e \Vert = \Vert x \Vert$. Since $\Vert \Vert x \Vert e \pm x \Vert \le 2 \Vert x \Vert$, we conclude that $\Vert x \Vert e \pm x \in X_e^+$ for all $x \in X$. Thus $e$ is an order unit for $X$. We also deduce that $\Vert x \Vert_e \le \Vert x \Vert$ for all $x \in X$ where $\Vert\cdot\Vert_e$ is the order unit (semi)norm on $X$. We prove the equality. Let $x \in X$ and $\lambda \ge 0$ be such that $\lambda e \pm x \in X_e^+$. Since $\Vert \lambda e \pm x \Vert \le \lambda + \Vert x \Vert$, we have 
	$$\Vert 2 (\lambda e \pm x) - (\lambda + \Vert x \Vert) e \Vert \le \lambda + \Vert x \Vert.$$ 
	In other words, $\Vert 2  x \pm (\lambda - \Vert x \Vert) e \Vert \le \lambda + \Vert x \Vert$. Thus 
	$$4 \Vert x \Vert \le \Vert 2  x + (\lambda - \Vert x \Vert) e \Vert + \Vert 2  x - (\lambda - \Vert x \Vert) e \Vert \le 2 (\lambda + \Vert x \Vert)$$ 
	so $\Vert x \Vert \le \lambda$. Hence $\Vert x \Vert \le \Vert x \Vert_e$ and consequently, $\Vert x \Vert = \Vert x \Vert_e$ for every $x \in X$. Further, if $\pm x \in X_e^+$ for some $x \in X$, then as above, we get $\Vert x \Vert \le 0$. Thus $x = 0$ so that $X_e^+$ is proper. 
	
	Finally, we prove that $X_e^+$ is Archimedean. Assume that $x \in X$ be such that $x + \lambda e \in X_e^+$ for all $\lambda > 0$. Since $\Vert x + \lambda e \Vert \le \Vert x \Vert + \lambda$, we have $\Vert 2 (x + \lambda e) - (\Vert x \Vert + \lambda) e \Vert \le \Vert x \Vert + \lambda$ for all $\lambda > 0$. Letting $\lambda \to 0^+$, we get $\Vert 2 x - \Vert x \Vert e \Vert \le \Vert x \Vert$. Thus $x \in X_e^+$. That is, $X_e^+$ is Archimedean.
\end{proof}
The following observation is a direct consequence of Proposition \ref{pos} which is the converse of Theorem \ref{nou}.
\begin{corollary}
	Let $(V, e)$ be an order unit space. Then $e$ is a norming order unit for $V$ with $V_e^+ = V^+$.
\end{corollary}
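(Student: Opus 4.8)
The plan is to read off both assertions directly from Proposition \ref{pos}, using $\Vert x \Vert$ itself as the distinguished choice of the parameter $\lambda$. First I would record that, since $(V,e)$ is an order unit space, the order unit satisfies $\Vert e \Vert = 1$, so $e$ meets the normalization requirement built into the definition of a norming order unit; nothing beyond recalling this standard fact is needed here.

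Next, to verify that $e$ is a norming order unit, I would simply unwind the definition: I must show that $\Vert 2 x - \Vert x \Vert e \Vert \le \Vert x \Vert$ holds whenever there is some $\lambda \ge \Vert x \Vert$ with $\Vert 2 x - \lambda e \Vert \le \lambda$. But this hypothesis is precisely condition (3) of Proposition \ref{pos} applied to $u = x$, and the implication (3) $\Rightarrow$ (4) yields $\Vert 2 x - \Vert x \Vert e \Vert = \Vert x \Vert$, which is in particular $\le \Vert x \Vert$. Thus the defining implication holds and $e$ is a norming order unit.

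For the equality $V_e^+ = V^+$, I would prove the two inclusions separately, again quoting Proposition \ref{pos}. If $x \in V^+$, then the implication (1) $\Rightarrow$ (4) gives $\Vert 2 x - \Vert x \Vert e \Vert = \Vert x \Vert$, so $x$ lies in $V_e^+$ by its very definition, whence $V^+ \subseteq V_e^+$. Conversely, if $x \in V_e^+$, then $\Vert 2 x - \Vert x \Vert e \Vert \le \Vert x \Vert$, which is an instance of condition (3) with witness $\lambda = \Vert x \Vert$; the implication (3) $\Rightarrow$ (1) then places $x$ in $V^+$, giving $V_e^+ \subseteq V^+$. Together these inclusions furnish the asserted equality.

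The argument is entirely a matter of matching definitions to the equivalences already established, so I do not anticipate any genuine obstacle. The one point deserving a moment's attention is the observation that $\lambda = \Vert x \Vert$ is an admissible choice in the clause ``for some $\lambda \ge \Vert x \Vert$'', so that membership in $V_e^+$ is literally an instance of condition (3); once this identification is made, the corollary follows at once.
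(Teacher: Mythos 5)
Your proof is correct and matches the paper exactly: the paper states the corollary as a direct consequence of Proposition \ref{pos} without further argument, and your careful unwinding — using (3) $\Rightarrow$ (4) for the norming order unit property, and (1) $\Leftrightarrow$ (3) with the witness $\lambda = \Vert x \Vert$ for the equality $V_e^+ = V^+$ — is precisely the intended derivation.
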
 
\begin{corollary}
	A strictly convex real normed linear space of dimension more than $1$ does not possess any norming order unit.
\end{corollary}
\begin{proof}
	Let $V$ be a strictly convex real normed linear space of dimension more than $1$. Assume to the contrary that $e \in V$ with $\Vert e \Vert = 1$ is a norming order unit for $V$. Then $(V, V_e^+, e)$ is an order unit space such that the norm on $V$ is the order unit norm determined by $e$. Here $V_e^+ = \lbrace u \in V: \Vert 2 u - \Vert u \Vert e \Vert \le \Vert u \Vert \rbrace$. Let $u \in V_e^+$ with $\Vert u \Vert = 1$ and $u \ne e$. It follows from Proposition \ref{pos} that $\Vert 2 u - e \Vert = 1$. Put $v := 2 u - e$. Then $\Vert v \Vert = 1$ with $v \ne e$. Since $V$ is strictly convex, we must have $\Vert \frac 12 (e + v) \Vert < 1$. This leads to a contradiction as $\frac 12 (e + v) = u$. 
\end{proof} 
In \cite{K24}, the author has shown that the skeleton $S_V := \lbrace u \in V^+: \Vert u \Vert = 1 = \Vert e - u \Vert \rbrace$ of an order unit space $(V, V^+, e)$ describes the whole space. So it may be interesting to describe skeletal elements (that is, the elements in $S_V$) in terms of norming order units independently. We shall use $\infty$-orthogonality for this purpose. Let us recall that $x, y \in X$ in a real Banach space $X$ are said to \emph{orthogonal} to each other, if $\Vert x + k y \Vert = \max \lbrace \Vert x \Vert, \Vert k y \Vert \rbrace$ for all real number $k$ \cite{K14}.
\begin{proposition}
	Let $X$ be a real normed linear space, $e \in X$ with $\Vert e \Vert = 1$ be a norming order unit for $X$ and assume that $u \in X$. Then $u$ is a skeletal element in the order unit space $(X, X_e^+, e)$ if and only if $\Vert u \Vert = \Vert e - u \Vert$ and $\Vert 2 u - e \Vert = 1$. 
\end{proposition} 
\begin{proof}
	First, we assume that $u$ is a skeletal element of $(X, X_e^+, e)$. That is, $u \in X_e^+$ and $\Vert u \Vert = 1 = \Vert e - u \Vert$. Then $e - u \in X_e^+$.  Since $1 = \Vert e \Vert = \Vert u + (e - u) \Vert$, we have $u \perp_{\infty} (e - u)$ by \cite[Theorem 3.3]{K14}. Thus
	$$\Vert 2 u - e \Vert = \Vert u - (e - u) \Vert = \max \lbrace \Vert u \Vert, \Vert e - u \Vert \rbrace = 1.$$ 
	Conversely, we assume that $\Vert u \Vert = \Vert e - u \Vert$ and $\Vert 2 u - e \Vert = 1$. Since $e$ is the norming order unit for $X_e^+$, we have $e \pm (2 u - e) \in X_e^+$, or equivalently, $u, e - u \in X_e^+$. Thus $\Vert u \Vert = \Vert e - u \Vert \le 1$. Also then 
	$$1 = \Vert 2 u - e \Vert = \Vert u - (e - u) \Vert \le \max \lbrace \Vert u \Vert, \Vert e - u \Vert \rbrace = \Vert u \Vert$$ 
	so that $\Vert u \Vert = \Vert e - u \Vert = 1$. Thus $u$ is a skeletal element in the order unit space $(X, X_e^+, e)$. 
\end{proof}
\subsection{Some examples.} 
\begin{example}
	{\bf Norming order units in $\ell_{\infty}$.} Since the constant sequence $\langle 1\rangle$ is the norming order unit for $\ell_{\infty}$ together with its natural cone, and since $\langle \alpha_n \rangle \mapsto \langle \epsilon_n \alpha_n \rangle$ is a surjective isometry in $\ell_{\infty}$ where $\epsilon_n \in \lbrace 1, - 1 \rbrace$, we deduce that $\langle \epsilon_n \rangle$ is also a norming order unit for $\ell_{\infty}$ for every choices of $\epsilon_n$. We show that these are the only norming order units in $\ell_{\infty}$. 
	
	Let $e_0 = \langle \alpha_n \rangle \in \ell_{\infty}$ with $\Vert \langle \alpha_n \rangle \Vert_{\infty} = 1$. Without any loss of generality, we also assume that $\alpha_n \ge 0$ for every $n$. Thus $\sup_{n \in \mathbb{N}} \alpha_n = 1$. Let that $\alpha_{n_0} < 1$ for some $n_0 \in \mathbb{N}$. Put $\lambda_0 = \frac{2}{1 + \alpha_{n_0}}$. Then $\lambda_0 > 1$. Consider $e_{n_0} \in \ell_{\infty}$ with $1$ at $n_0$-th place and $0$ elsewhere. Then $\Vert e_{n_0} \Vert_{\infty} = 1$. Also 
	$$\Vert 2 e_{n_0} - \lambda_0 e_0 \Vert_{\infty} = \max \lbrace \vert 2 - \lambda_0 \alpha_{n_0} \vert, \sup_{n \ne n_0} \alpha_n \rbrace = \lambda_0.$$ 
	However,
	$$\Vert 2 e_{n_0} - e_0 \Vert_{\infty} = \max \lbrace \vert 2 - \alpha_{n_0} \vert, \sup_{n \ne n_0} \alpha_n \rbrace = 2 - \alpha_{n_0} > 1 = \Vert e_{n_0} \Vert_{\infty}.$$ 
	Thus $e_0$ fails to be a norming order unit for $\ell_{\infty}$. 
\end{example} 
\begin{example}\label{l1}
	{\bf Norming order units in $\ell_1$.} Let $e_0 = \langle \alpha_n \rangle \in \ell_1$ with $\Vert \langle \alpha_n \rangle \Vert_1 = 1$. Without any loss of generality, we can again assume that $\alpha_n \ge 0$ for every $n$. Then $\sum_{n \in \mathbb{N}} \alpha_n = 1$. 
	
	Case 1. Assume that $0 < \alpha_{n_0} < 1$ for some $n_0$. Consider $e_{n_0} \in \ell_1$ with $1$ at $n_0$-th place and $0$ elsewhere. Then $\Vert e_{n_0} \Vert_1 = 1$. Also 
	\begin{eqnarray*}
		\left\Vert 2 e_{n_0}  - \frac{2}{\alpha_{n_0}} e_0 \right\Vert_1 &=& \left(\frac{2}{\alpha_{n_0}} \right) \Vert \alpha_{n_0} e_{n_o} - e_0 \Vert_1 \\ 
		&=& \frac{2}{\alpha_{n_0}} \left(\sum_{n \ne n_0} \alpha_n \right) \\ 
		&=& \frac{2 (1 - \alpha_{n_0})}{\alpha_{n_0}} \\ 
		&<& \frac{2}{\alpha_{n_0}}.
	\end{eqnarray*} 
	However 
	\begin{eqnarray*}
		\left\Vert 2 e_{n_0}  - e_0 \right\Vert_1 
		&=& (2 - \alpha_{n_0}) + \left(\sum_{n \ne n_0} \alpha_n \right) \\ 
		&=& (2 - \alpha_{n_0}) + (1 - \alpha_{n_0}) \\ 
		&>& 1.
	\end{eqnarray*} 
	Thus in this case, $e_0$ fails to be a norming order unit for $\ell_1$. So we assume now that $\alpha_n \in \lbrace 0, 1 \rbrace$.
	
	Case 2. Assume that $\alpha_{n_0} = 1$ for some $n_0$. Then $\alpha_n = 0$, if $n \ne n_0$ so that $e_0 = e_{n_0}$. We show that $e_0$ is a norming order unit for $\ell_1$. Let $u = \langle x_n \rangle \in \ell_1$ with $\Vert u \Vert_1 = 1$. Let $\lambda \ge 1$ be such that $\Vert 2 u - \lambda e_0 \Vert_1 \le \lambda$ or equivalently, $\vert 2 x_{n_0} - \lambda \vert + 2 \sum_{n \ne n_0} \vert x_n \vert \le \lambda$. Since $\sum_{n \ne n_0} \vert x_n \vert = 1 - \vert x_{n_0} \vert$, we have $\vert 2 x_{n_0} - \lambda \vert \le \lambda  + 2 \vert x_{n_0} \vert - 2$. In other words, 
	$$- (\lambda  + 2 \vert x_{n_0} \vert - 2) \le 2 x_{n_0} - \lambda \le \lambda  + 2 \vert x_{n_0} \vert - 2,$$
	whence $1 - \vert x_{n_0} \vert \le x_{n_0} \le \lambda + \vert x_{n_0} \vert - 1$. Thus $x_{n_0} \ge 0$ and consequently, $x_{n_0} \ge \frac 12$. Therefore,  
	$$\Vert 2 u - e_0 \Vert_1 = (2 x_{n_0} - 1) + 2 \sum_{n \ne n_0} \vert x_n \vert = (2 x_{n_0} - 1) + 2 (1 - x_{n_0}) = 1.$$ 
	Hence $e_{n_0}$ is a norming order unit for $\ell_1$. In this way, we may conclude that $\lbrace \pm e_n: n \in \mathbb{N} \rbrace$ is the set of all norming order units for $\ell_1$. 
\end{example} 
We end the paper with the following conjecture.
\begin{conjecture}
	Let $(V, V^+, e)$ be an order unit space and assume that $u \in V$ with $\Vert u \Vert = 1$ is a norming order unit. Then there exists a surjective linear isometry $T: V \to V$ such that $T(e) = u$. 
\end{conjecture}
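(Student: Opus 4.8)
The plan is to exploit the fact that $u$, being a norming order unit, endows $V$ with a \emph{second} order unit space structure. By Theorem \ref{nou} the set $V_u^+ = \lbrace x \in V : \Vert 2x - \Vert x\Vert u\Vert \le \Vert x\Vert\rbrace$ makes $(V, V_u^+, u)$ an order unit space whose order unit norm again coincides with the ambient norm $\Vert\cdot\Vert$. Thus $V$ now carries two order unit norms, $\Vert\cdot\Vert_e$ coming from $(V^+, e)$ and $\Vert\cdot\Vert_u$ coming from $(V_u^+, u)$, and \emph{both equal} $\Vert\cdot\Vert$. I would first reduce the conjecture to a purely order-theoretic statement: it suffices to produce a linear bijection $T : V \to V$ with $T(e) = u$ and $T(V^+) = V_u^+$. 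Indeed, such a $T$ satisfies $\lambda e \pm v \in V^+ \iff \lambda u \pm Tv \in V_u^+$, so $\Vert v\Vert_e = \inf\lbrace \lambda : \lambda e \pm v \in V^+\rbrace = \inf\lbrace \lambda : \lambda u \pm Tv \in V_u^+\rbrace = \Vert Tv\Vert_u$; since both order unit norms are $\Vert\cdot\Vert$, the map $T$ is automatically a surjective linear isometry with $T(e) = u$.

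To construct such an order isomorphism I would pass to the dual, using that the two order unit norms agree. The spaces $(V, V^+, e)$ and $(V, V_u^+, u)$ then share the same dual unit ball $V_1' \subset V'$, and each exhibits it as a \emph{bipyramid}: writing $S_e := S(V) = \lbrace f \in V_1' : f(e) = 1\rbrace$ and $S_u := \lbrace f \in V_1' : f(u) = 1\rbrace$ for the two state spaces, standard order unit space duality gives $V_1' = \co(S_e \cup (-S_e)) = \co(S_u \cup (-S_u))$, with $S_e$ and $S_u$ the weak*-exposed maximal faces cut out by $\wh e$ and $\wh u$. A bijection $T$ of the desired kind corresponds precisely to a weak*-continuous linear isometry $R = T^*$ of $V'$ interchanging the two bipyramid structures, i.e. $R(S_u) = S_e$: one checks that this single condition forces both $T(V^+) = V_u^+$ (since $v \in V^+ \iff f(v)\ge 0$ for all $f \in S_e$, and $R(S_u)=S_e$) and $T(e) = u$ (since $f(Te - u) = 0$ for all $f \in S_u \cup (-S_u)$, which spans $V'$). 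So the whole problem becomes: build a weak*-continuous self-isometry of $V'$ swapping the apex faces $S_u \leftrightarrow S_e$ of the common dual ball, and take $T$ to be its pre-adjoint.

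The crux, and the step I expect to resist, is the construction of this symmetry $R$. A priori a convex body can be a bipyramid over two affinely inequivalent bases, so the mere bipyramid shape cannot yield $R$; the hypothesis that $u$ is a \emph{norming} order unit (and not merely the unit of the second structure) must be used decisively to force $S_u$ to sit inside $V_1'$ congruently to $S_e$. The natural strategy is to reconstruct $R$ from the facial geometry of $V_1'$: analyze the four faces $S_e \cap S_u$, $S_e \cap (-S_u)$, $(-S_e) \cap S_u$, $(-S_e) \cap (-S_u)$ together with the subspaces of $V'$ they span, and assemble $R$ as the reflection that fixes the shared part and interchanges the complementary apexes, mirroring the finite-dimensional case where $u=(1,-1)$ in $\ell_\infty^2$ gives the coordinate flip. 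The obstacle is precisely that this assembly is not forced by convexity alone and must be extracted from the norming condition.

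As a guide, in the commutative model $V = C(K)$ (so $e = 1$) a norming order unit is exactly a $u$ with $u^2 = 1$, and the required isometry is multiplication by $u$, whose adjoint $R$ is mult by $u$ on measures; the content of the conjecture is therefore to manufacture an analogue of ``multiplication by the symmetry $u$'' in the \emph{absence} of any product, which is what makes it delicate. Finally, I would dispose of completeness by passing to the completion $\overline{V}$, in which $e$ and $u$ remain norming order units and the relevant cones are the closures of $V^+, V_u^+$; I would build $T$ there and then check that it restricts to $V$, since $T$ is determined by its action on the order units and the cones, which are intrinsic to $V$.
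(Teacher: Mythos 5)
This statement is posed in the paper as an open \emph{conjecture}: the author gives no proof of it, so there is no argument of the paper's to compare yours against, and your attempt has to stand entirely on its own. Judged that way, what you have written is a correct \emph{reformulation} of the conjecture, not a proof of it. The preparatory steps are sound: Theorem~\ref{nou} does give the second order unit structure $(V, V_u^+, u)$ with order unit norm equal to $\Vert\cdot\Vert$; a linear bijection with $T(e) = u$ and $T(V^+) = V_u^+$ would indeed be a surjective isometry, by exactly the infimum computation you give; and dualizing, such a $T$ corresponds to a weak*-continuous isometry $R$ of $V'$ with $R(S_u) = S_e$, since $S_e$ and $S_u$ determine the cones (Archimedeanness) and span $V'$ (as $V_1' = \co(S_e \cup (-S_e)) = \co(S_u \cup (-S_u))$). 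But the conjecture's entire content is the existence of such a $T$, and by Kadison's theorem a unital surjective isometry between (complete) order unit spaces is automatically an order isomorphism, so the statement ``there is an $R$ with $R(S_u) = S_e$'' is \emph{equivalent} to the conjecture rather than a weaker target. Your reduction travels in a circle, and the one step that carries all the difficulty --- extracting the symmetry $R$ from the norming hypothesis --- is the step you explicitly leave undone. The facial-geometry program you sketch (analyzing $S_e \cap S_u$, $S_e \cap (-S_u)$, etc., and assembling a reflection) is only a heuristic: as your own remark about affinely inequivalent bipyramid bases concedes, nothing in the proposal supplies the mechanism by which the norming condition rigidifies the position of $S_u$ inside $V_1'$, and in infinite dimensions those intersection faces may be empty or fail to span complementary subspaces, so the ``reflection'' has no construction.

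Two smaller points. First, the completion step has its own gap: even granting that $e$ and $u$ remain norming order units in $\overline{V}$ (the defining inequality does pass to limits, but you should check that $\overline{V}_u^+$ computed in $\overline{V}$ restricts to $V_u^+$), a surjective isometry $T$ of $\overline{V}$ need not satisfy $T(V) = V$; knowing $T(\overline{V_u^+}$-type data) does not force $T(V^+) \subseteq V$, so ``the cones are intrinsic to $V$'' does not finish that reduction. Second, your commutative sanity check is consistent with the paper --- the $\ell_\infty$ example shows the norming order units are exactly the $\pm 1$-valued sequences, where $T$ is multiplication by $u$ --- but observe that the paper's $\ell_1$ example (norming order units $\pm e_n$, with coordinate permutations and sign flips as isometries) would also need to be covered by any purported construction, and your program, modeled on ``multiplication by a symmetry,'' says nothing about how it would produce a permutation-type isometry there. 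In short: the skeleton is a legitimate and probably the natural equivalent dual formulation, but no progress is made on the conjecture itself, which remains open in the paper as well.
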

\thanks{
	{\bf Acknowledgements:} 
	The author was partially supported by Science and Engineering Research Board, Department of Science and Technology, Government of India sponsored  Mathematical Research Impact Centric Support project (reference no. MTR/2020/000017).}

\end{document}